\newtheorem{theorem}{Theorem}[section]
\newtheorem{lemma}[theorem]{Lemma}
\newtheorem{corollary}[theorem]{Corollary}
\newtheorem{remark}[theorem]{Remark}
\numberwithin{equation}{section}
\newcounter{minutes}\setcounter{minutes}{\time}
\newcounter{hours}\setcounter{hours}{\time}
\newcommand{\D}{{\mathbb D}}
\newcommand{\real}{{\operatorname{Re}\,}}
\newcommand{\ds}{\displaystyle}
\begin{document}
%\hfill{File}

\bibliographystyle{amsplain}

\title[On generalized coefficient functionals]%
{On coefficient functionals associated with the Zalcman conjecture}

%%%%%%%% BEGIN TIMESTAMP
\def\thefootnote{}
\footnotetext{ \texttt{\tiny File:~\jobname .tex,
          printed: \number\day-\number\month-\number\year,
          \thehours.\ifnum\theminutes<10{0}\fi\theminutes}
} \makeatletter\def\thefootnote{\@arabic\c@footnote}\makeatother
%%%%%%%% END TIMESTAMP

\author[S. Agrawal]{Sarita Agrawal}
\address{S. Agrawal, Discipline of Mathematics,
Indian Institute of Technology Indore,
Simrol, Khandwa Road, Indore 452 020, India}
\email{saritamath44@gmail.com}

\author[S. K. Sahoo]{Swadesh Kumar Sahoo}
%${}^*$}
\address{S. K. Sahoo, Discipline of Mathematics,
Indian Institute of Technology Indore,
Simrol, Khandwa Road, Indore 452 020, India}
\email{swadesh@iiti.ac.in}

\begin{abstract}
We consider certain subfamilies, of the family of univalent functions in the open unit disk, 
defined by means of sufficient coefficient conditions
for univalency. This article is devoted to studying the problem of the well-known 
conjecture of Zalcman consisting of a generalized coefficient functional, 
the so-called generalized Zalcman conjecture problem, for functions belonging to
those subfamilies.
We estimate the bounds associated with the generalized coefficient functional and show that the 
estimates are sharp.
\\
 
\smallskip
\noindent
{\bf 2010 Mathematics Subject Classification}. Primary: 30C45, 30C55; Secondary: 30C50.

\smallskip
\noindent
{\bf Key words and phrases.} 
Convex functions, convex hull, probability measure, univalent functions, uniformly starlike and uniformly 
convex functions, spiral functions, coefficient functional, Zalcman's conjecture.
\end{abstract}

\maketitle
\pagestyle{myheadings}
\markboth{S. Agrawal and S. K. Sahoo}{On coefficient functionals associated with 
the Zalcman conjecture}

\section{Introduction}
Denote by $\mathcal{A}$, the class of all analytic functions $f$ in 
$\D:=\{z\in \mathbb{C}:|z|<1\}$ of the form
$$ f(z)=z+\sum_{n=2}^\infty a_n z^n.
$$
Denote by $\mathcal{S}$, the class of {\em univalent functions} in $\mathcal{A}$. 
Then $|a_2^2-a_3|\le 1$ holds for $f\in \mathcal{S}$, see \cite[Theorem~1.5]{Pom75}. 
At the end of 1960's, Zalcman made a conjecture that each $f\in \mathcal{S}$ 
satisfies the inequality 
\begin{equation}\label{Intro-eqn1}
|a_n^2-a_{2n-1}|\le (n-1)^2, \quad n\ge 2
\end{equation} 
with equality for the Koebe function $k(z)=z/(1-z)^2$. 
One of the main aims of the Zalcman conjecture was to prove the Bieberbach conjecture: $|a_n|\le n$, for $n\ge 2$, when $f\in \mathcal{S}$,
using the famous Hayman Regularity Theorem (see \cite[Theorem~5.6, pp. 163]{Dur83}). 
The Bieberbach conjecture was a challenging open problem for function theorists for several decades and was finally settled by de Branges \cite{deB85} in $1984$. 

The problem (\ref{Intro-eqn1}) has been studied for several well-known subclasses of the class $\mathcal{S}$. For example, in \cite{BT86}, Brown and Tsao proved that (\ref{Intro-eqn1}) holds for the class $\mathcal{T}$
of typically real functions and the class $\mathcal{S}^*$ of starlike functions. 
In \cite{Ma88}, Ma proved the Zalcman conjecture for the class $\mathcal{K}$ of close-to-convex functions when $n\ge 4$. 
Readers can refer to, for instance, \cite{ALP14,Kru95,Kru10,LP14} and references therein for more information on this topic.     
A generalized version of Zalcman's inequality, in terms of the so-called 
{\em generalized coefficient functional $\lambda a_n^2-a_{2n-1}$}, $\lambda>0$, has been considered in \cite{ALP14,BT86,EV,LP14}.

In \cite{Ma99}, Ma proposed a generalized version of the Zalcman conjecture as follows:
for $f\in \mathcal{S}$, 
$$|a_n a_m-a_{n+m-1}|\le (n-1)(m-1) \quad (n,m=2,3,\ldots)
$$
and proved that this holds for starlike functions and univalent functions with 
real co-efficients.
In this paper, we establish sharp estimates of the Zalcman conjecture in the form
proposed by Ma in \cite{Ma99} for some subclasses of $\mathcal{S}$. Consequently, we obtain sharp estimates of the results 
proved in \cite{EV} for remaining ranges of $\lambda$. 

\section{Preliminaries and Main results}
We use the concept of {\em convex hull of a set} in this paper, but mainly for 
the set $\mathcal{C}$ of convex functions. 
Denote by $co(\mathcal{C})$, the {\em convex hull of $\mathcal{C}$} and its closure is denoted by $\overline{co(\mathcal{C})}$ 
in the topology of uniform convergence on compact subsets of $\D$. 

A function $f \in \mathcal{A}$ is said to be {\em starlike of order $\beta\, (0\le \beta<1)$} if $\real\{zf'(z)/f(z)\}>\beta$ and denote the class of starlike functions of order $\beta$ by $\mathcal{S}^*(\beta)$.  Similarly, a function $f \in \mathcal{A}$ is said to be {\em convex of order $\beta\, (0\le \beta<1)$} if $\real\{1+zf''(z)/f'(z)\}>\beta$ and denote the class of convex functions of order $\beta$ by $\mathcal{C}(\beta)$. Clearly, functions in the classes $\mathcal{S}^*(\beta)$ and $\mathcal{C}(\beta)$ are univalent in $\D$. Moreover $\mathcal{S}^*(0)=\mathcal{S}^*$ and $\mathcal{C}(0)=C$.

A function $f$ is said to be {\em uniformly starlike} in $\D$ if
$f$ is starlike and has the property that for every circular arc $\gamma$ contained
in $\D$, with center $\zeta \in \D$, the arc $f(\gamma)$ is starlike with respect to
$f(\zeta)$. We denote by $\mathcal{UST}$, the class of all uniformly starlike functions. 
Similarly, we say a convex function $f$ in $\D$ is {\em uniformly convex} if for each 
circular arc $\gamma$ in $\D$ with center $\eta \mbox{ in } \D$, the image arc $f(\gamma)$ is convex. Denote the class of all uniformly convex functions by 
$\mathcal{UCV}$, see \cite{Goo91-1,Goo91-2}. We call a function $f\in \mathcal{A}$
is {\em $\nu$-spiral-like of order $\beta, 0\le \beta <1$}, if there is a real number $\nu\,(-\pi/2< \nu<\pi/2)$ such
that $\real [e^{i\nu}\{zf'(z)/f(z)\}]>\beta \cos \nu$ for $z\in \D$. We denote by $\mathcal{S}_p^\nu(\beta)$, the class of
$\nu$-spiral-like functions of order $\beta$, see \cite{KO02}.
More literature on spiral-like functions can be found in \cite{AS91,Lib67,MA81}. 

Recently, in \cite{EV}, Efraimidis and Vukoti{\'c} have studied the generalized Zalcman
coefficient functional for the subclasses, $\overline{co(\mathcal{C})}$, $\mathcal{R}$ and $H$ of $\mathcal{S}$, where 
the classes $\mathcal{R}$ and $H$ are respectively known as {\em the  Noshiro-Warschawski 
class} and {\em the Hurwitz class}, defined by
$$\mathcal{R}=\{f\in \mathcal{A}:\real{f'(z)}>0\}
$$
and 
$$H=\left \{f\in \mathcal{A}:f(z)=z+\sum_{n=2}^{\infty}a_n z^n \mbox{ and } \sum_{n=2}^{\infty}n|a_n|\le 1\right \}.
$$
A well-known fact is that  
$$H \subset \mathcal{R}\cap \mathcal{S}^* \subseteq \mathcal{S},
$$ 
where the inclusion relation $H \subset \mathcal{R}$ is explained in \cite{EV}.
Now we recall

\medskip
\noindent
{\bf Theorem~A. }\cite[Theorem~3]{EV} {\em Let $0<\lambda\le 2$. If $f\in \overline{co(C)}$, then 
$|\lambda a_n^2-a_{2n-1}|\le 1$ for all $n\ge 2$. For any fixed $n$ and $\lambda<2$, 
equality holds only for the functions of the following form (and for their rotations):
\begin{equation}\label{eqn-thmA}
f(z)=\sum_{k=1}^{2n-2}m_k \frac{z}{1-e^{i\theta_k}z},
\end{equation}
where $0\le m_k\le 1$, $\theta_k=\frac{(2k+1)\pi}{2n-2}$, and 
$$\sum_{k=1}^{n-1}m_{2k}=\sum_{k=1}^{n-1}m_{2k-1}=\frac{1}{2}.
$$}

\medskip
\noindent {\bf Theorem~B. }\cite[Theorem~4]{EV} {\em If $0<\lambda\le 4/3$ and $f\in \mathcal{R}$, then 
for all $n\ge 2$ we have
$$|\lambda a_n^2-a_{2n-1}|\le \frac{2}{2n-1}.
$$
For $\lambda<4/3$ and for any fixed $n\ge 2$, 
equality holds only for the functions of the following form (and for their rotations):
$$F(z)=-z+2\int_0^z \frac{f(t)}{t}dt
$$
where $f(z)$ is given by (\ref{eqn-thmA}).}

\medskip
\noindent
{\bf Theorem~C. }\cite[Theorem~6]{EV}\label{Zac-H}
{\em If $\lambda >0$ and $f\in H$, then for each $n\ge 2$ we have 
$$|\lambda a_n^2-a_{2n-1}|\le \max \left\{\frac{\lambda}{n^2}, \frac{1}{2n-1}\right\}.
$$
Equality holds if and only if
$$f(z)=
\left \{
\begin{array}{ll}
z+\ds\frac{\alpha}{2n-1}z^{2n-1} & \mbox{ for } \lambda \le \ds\frac{n^2}{2n-1},\\
z+\ds\frac{\alpha}{n}z^n & \mbox{ for }\lambda \ge \ds\frac{n^2}{2n-1},\\
\end{array}
\right.
$$
where $\alpha$ is a complex number such that $|\alpha|=1$.} 

\medskip
We intend to extend Theorems A and B in terms of the generalized Zalcman conjecture, 
in the form suggested by Ma in \cite{Ma99}, for the classes $\overline{co(C)}$ and $\mathcal{R}(\beta):=\{f\in \mathcal{A}:\real{f'(z)}>\beta\}$ respectively, where $\beta\in [0,1)$. Note that $\mathcal{R}=\mathcal{R}(0)$.
\begin{theorem}\label{Zac-coc}
If $f\in \overline{co(C)}$, then 
$$|\lambda a_n a_m-a_{n+m-1}|\le \lambda -1,
$$
where $n,m=2,3,\ldots$ and $\lambda \in [2, \infty)$.
Equality holds for the function $l(z)=z/(1-z)$ and its rotations.
\end{theorem}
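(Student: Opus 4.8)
The plan is to invoke the classical integral representation of the closed convex hull of the convex functions (Brickman--MacGregor--Wilken): $f\in\overline{co(\mathcal{C})}$ if and only if there is a Borel probability measure $\mu$ on $\partial\D$ with
$$f(z)=\int_{\partial\D}\frac{z}{1-xz}\,d\mu(x),$$
which is exactly the representation underlying the extremal functions in Theorem~A. Reading off Taylor coefficients, $a_k=\int_{\partial\D}x^{k-1}\,d\mu(x)$ for all $k\ge 1$ (in particular $a_1=1$, and $|a_k|\le 1$). The first step is the elementary identity, obtained by expanding the product and using $\int_{\partial\D}d\mu=1$,
$$a_{n+m-1}-a_na_m=\frac12\int_{\partial\D}\int_{\partial\D}\bigl(x^{n-1}-y^{n-1}\bigr)\bigl(x^{m-1}-y^{m-1}\bigr)\,d\mu(x)\,d\mu(y).$$

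Next I would estimate the right-hand side by the Cauchy--Schwarz inequality applied to the product probability measure $\mu\times\mu$ on $\partial\D\times\partial\D$. Since $|x|=|y|=1$ on $\mathrm{supp}\,\mu$, a short computation gives
$$\int_{\partial\D}\int_{\partial\D}\bigl|x^{n-1}-y^{n-1}\bigr|^{2}\,d\mu(x)\,d\mu(y)=2-2\real\bigl(a_n\overline{a_n}\bigr)=2\bigl(1-|a_n|^{2}\bigr),$$
and similarly with $m$ in place of $n$. Hence
$$\bigl|a_{n+m-1}-a_na_m\bigr|\le\sqrt{\bigl(1-|a_n|^{2}\bigr)\bigl(1-|a_m|^{2}\bigr)}.$$

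Then I would write $\lambda a_na_m-a_{n+m-1}=(\lambda-1)a_na_m-\bigl(a_{n+m-1}-a_na_m\bigr)$ and apply the triangle inequality together with the previous bound and $|a_n|,|a_m|\le 1$:
$$\bigl|\lambda a_na_m-a_{n+m-1}\bigr|\le(\lambda-1)\,|a_n|\,|a_m|+\sqrt{\bigl(1-|a_n|^{2}\bigr)\bigl(1-|a_m|^{2}\bigr)}.$$
It then remains to check the purely real inequality $h(u,v):=(\lambda-1)uv+\sqrt{(1-u^{2})(1-v^{2})}\le\lambda-1$ for $u,v\in[0,1]$ and $\lambda\ge 2$. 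Maximizing in $v$ for fixed $u$ gives $\max_v h(u,v)=\sqrt{(\lambda-1)^{2}u^{2}+(1-u^{2})}=\sqrt{1+\lambda(\lambda-2)u^{2}}$, which, because $\lambda(\lambda-2)\ge 0$, is largest at $u=1$ with value $\sqrt{1+\lambda(\lambda-2)}=\lambda-1$. This yields the asserted estimate. Sharpness is immediate: $l(z)=z/(1-z)=\int_{\partial\D}\frac{z}{1-xz}\,d\delta_{1}(x)$ has $a_k=1$ for every $k$, so $\lambda a_na_m-a_{n+m-1}=\lambda-1$; for a rotation $e^{-i\theta}l(e^{i\theta}z)$ the functional is multiplied by the unimodular factor $e^{i(n+m-2)\theta}$, so its modulus is still $\lambda-1$.

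The main obstacle I anticipate is conceptual rather than computational. The functional $\lambda a_na_m-a_{n+m-1}$ is quadratic in $f$, so one cannot simply test it on the extreme points $z/(1-xz)$ of $\overline{co(\mathcal{C})}$; moreover a naive pointwise estimate of the kernel $\lambda x^{n-1}y^{m-1}-x^{n+m-2}$ only produces $\lambda+1$, which is too weak by $2$. The workable idea is to split off the ``$\lambda=1$ piece'' $a_{n+m-1}-a_na_m$, recognize it as a symmetric double integral that vanishes on the diagonal, and control it via the second moments $1-|a_n|^{2}$, $1-|a_m|^{2}$ rather than pointwise; the hypothesis $\lambda\ge 2$ is precisely what makes the concluding two-variable optimization go through, complementing the range $0<\lambda\le 2$ of Theorem~A.
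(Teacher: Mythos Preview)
Your argument is correct but follows a genuinely different route from the paper's. Both start from the Brickman--MacGregor--Wilken representation $a_k=\int_{\partial\D}x^{k-1}\,d\mu(x)$. The paper then cites Livingston's inequality $|b_{n-1}b_{m-1}-b_{n+m-2}|\le 2$ for moments $b_k=2\int e^{ik\theta}\,d\mu$ (equivalently $|2a_na_m-a_{n+m-1}|\le 1$), splits the functional at $\lambda=2$ as $(\lambda-2)a_na_m+(2a_na_m-a_{n+m-1})$, and finishes in one line with $(\lambda-2)|a_n||a_m|+1\le\lambda-1$. You instead split at $\lambda=1$, derive from scratch the sharper bound $|a_{n+m-1}-a_na_m|\le\sqrt{(1-|a_n|^{2})(1-|a_m|^{2})}$ via the symmetrized double-integral identity and Cauchy--Schwarz, and then carry out a two-variable optimization of $(\lambda-1)uv+\sqrt{(1-u^{2})(1-v^{2})}$. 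Your proof is longer but entirely self-contained (no black-box lemma) and yields a finer intermediate estimate that tracks $|a_n|$ and $|a_m|$ simultaneously, potentially useful for analysing equality; the paper's version is terser once Livingston's lemma is taken for granted, and its general Lemma (with arbitrary weight $s(n)$) is reusable for other classes such as $\mathcal{R}(\beta)$.
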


\begin{theorem}\label{Zac-NW}
If $f\in \mathcal{R}(\beta)$, then 
$$|\lambda a_n a_m-a_{n+m-1}|\le \frac{4\lambda(1-\beta)^2}{nm}-\frac{2(1-\beta)}{n+m-1},
$$
where $n,m=2,3,\ldots$ and $\lambda \in \left[\frac{nm}{(1-\beta)(n+m-1)}, \infty\right)$.
Equality holds for the function $m(z)=-2(1-\beta)\ln{(1-z)}-z(1-2\beta)$ and its rotations.
\end{theorem}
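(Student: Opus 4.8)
The plan is to pass to the Herglotz representation for $f\in\mathcal{R}(\beta)$. Since $\real f'(z)>\beta$, the function $p(z)=(f'(z)-\beta)/(1-\beta)$ has positive real part with $p(0)=1$, so $p(z)=\int_{|\zeta|=1}\frac{1+\zeta z}{1-\zeta z}\,d\mu(\zeta)$ for a probability measure $\mu$ on the unit circle. Comparing Taylor coefficients yields $a_n=\frac{2(1-\beta)}{n}\,c_{n-1}$, where $c_j:=\int_{|\zeta|=1}\zeta^j\,d\mu(\zeta)$, so in particular $|c_j|\le1$. Writing $A=\frac{4\lambda(1-\beta)^2}{nm}$ and $B=\frac{2(1-\beta)}{n+m-1}$, the functional becomes
$$\lambda a_n a_m-a_{n+m-1}=A\,c_{n-1}c_{m-1}-B\,c_{n+m-2},$$
and the hypothesis $\lambda\ge\frac{nm}{(1-\beta)(n+m-1)}$ is exactly the inequality $A\ge2B$.

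The step I expect to be the main obstacle is that the bare triangle inequality only yields the too-weak bound $A+B$; one needs instead a correlation-type moment inequality, namely that for a probability measure $\mu$ on the circle and positive integers $j,k$,
$$\bigl|c_{j+k}-c_j c_k\bigr|\le\sqrt{(1-|c_j|^2)(1-|c_k|^2)}.$$
I would prove this by symmetrizing over $\mu\times\mu$: expanding the product gives
$$c_{j+k}-c_j c_k=\tfrac12\int_{|\zeta|=1}\int_{|\eta|=1}(\zeta^j-\eta^j)(\zeta^k-\eta^k)\,d\mu(\zeta)\,d\mu(\eta),$$
after which Cauchy--Schwarz on the product measure together with the identity $\int\!\!\int|\zeta^j-\eta^j|^2\,d\mu\,d\mu=2(1-|c_j|^2)$ closes the argument. (For $j=k$ this recovers the classical second-order Carath\'eodory estimate.)

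With this lemma available, I would write $A\,c_{n-1}c_{m-1}-B\,c_{n+m-2}=(A-B)\,c_{n-1}c_{m-1}-B\,(c_{n+m-2}-c_{n-1}c_{m-1})$, so that the triangle inequality and the lemma give
$$|\lambda a_n a_m-a_{n+m-1}|\le(A-B)\,|c_{n-1}|\,|c_{m-1}|+B\sqrt{(1-|c_{n-1}|^2)(1-|c_{m-1}|^2)}.$$
Substituting $|c_{n-1}|=\cos\alpha$, $|c_{m-1}|=\cos\gamma$ with $\alpha,\gamma\in[0,\pi/2]$ and applying the product-to-sum formulas, the right-hand side equals $\tfrac{A}{2}\cos(\alpha-\gamma)+\tfrac{A-2B}{2}\cos(\alpha+\gamma)$; since $A-2B\ge0$ this is at most $\tfrac{A}{2}+\tfrac{A-2B}{2}=A-B$, which is the asserted estimate. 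To see that it is sharp, take $\mu$ to be the unit point mass at $\zeta_0$ with $|\zeta_0|=1$: then every $c_j=\zeta_0^{\,j}$ has modulus $1$, all of the above inequalities become equalities, $f'(z)=(2\beta-1)+\frac{2(1-\beta)}{1-\zeta_0 z}$, and its antiderivative is a rotation of $m(z)=-2(1-\beta)\ln(1-z)-(1-2\beta)z$; one checks directly that $a_k=2(1-\beta)/k$ for $k\ge2$, whence $|\lambda a_n a_m-a_{n+m-1}|=A-B$.
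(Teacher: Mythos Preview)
Your proof is correct, but it takes a different route from the paper's. Both arguments start identically with the Herglotz representation and arrive at the expression $A\,c_{n-1}c_{m-1}-B\,c_{n+m-2}$ with $A\ge 2B$. From there the paper splits this as $(A-2B)\,c_{n-1}c_{m-1}-B\,(2c_{n-1}c_{m-1}-c_{n+m-2})$ and simply quotes Livingston's lemma $|2c_{j}c_{k}-c_{j+k}|\le 1$ (equivalently $|b_{n-1}b_{m-1}-b_{n+m-2}|\le 2$ with $b_j=2c_j$), so the triangle inequality immediately gives $(A-2B)+B=A-B$. You instead split as $(A-B)\,c_{n-1}c_{m-1}-B\,(c_{n+m-2}-c_{n-1}c_{m-1})$, prove from scratch the sharper covariance bound $|c_{j+k}-c_jc_k|\le\sqrt{(1-|c_j|^2)(1-|c_k|^2)}$ via symmetrization and Cauchy--Schwarz, and then need the trigonometric optimization to finish. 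Your argument is fully self-contained and your moment inequality is strictly stronger than Livingston's (indeed it implies it: $|2c_jc_k-c_{j+k}|\le|c_jc_k|+\sqrt{(1-|c_j|^2)(1-|c_k|^2)}\le 1$), at the cost of a longer endgame; the paper's approach is shorter because the key inequality is outsourced and the coefficient $A-2B$ in front of $c_{n-1}c_{m-1}$ already carries the right sign, so no optimization is needed.
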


\subsection{The class $\mathcal{H}$}
Define the class 
$$\mathcal{H}=\left \{f\in \mathcal{A}:f(z)=z+\sum_{n=2}^{\infty}a_n z^n \mbox{ and } \sum_{n=2}^{\infty}r(n) |a_n|\le 1, r(n)> 0 \mbox{ for } n\ge 2\right \}.
$$
Here is a partial list of restrictions on $r(n)$ such that $\mathcal{H}$ is a subclass of 
$\mathcal{S}$. 
For example, 
\begin{itemize}
\item If $r(n)=(n-\beta)/(1-\beta)$, then $\mathcal{H}\subset \mathcal{S}^*(\beta)\subset \mathcal{S}$ \cite{Sil97}. In particular, for $\beta=0$ we have $\mathcal{H}=H$, the Hurwitz class.
\item If $r(n)=n(n-\beta)/(1-\beta)$, then $\mathcal{H}\subset \mathcal{C}(\beta)\subset \mathcal{S}$ \cite{Sil97}. 
\item If $r(n)=3n-2$, then $\mathcal{H}\subset \mathcal{UST}\subset \mathcal{S}$ \cite{KC13}.
\item If $r(n)=n(2n-1)$, then $\mathcal{H}\subset \mathcal{UCV}\subset \mathcal{S}$ \cite{KC13}.
\item If $r(n)=n/(1-\beta)$, then $\mathcal{H}\subset \mathcal{R}(\beta)\subset \mathcal{S}$.
\item If $r(n)=1+[(n-1)/(1-\beta)]\sec \nu$, then $\mathcal{H}\subset \mathcal{S}_p^\nu(\beta) \subset \mathcal{S}$ \cite{KO02}.
\end{itemize}
In all these classes $\beta\in [0,1)$.
We now state our main result for the class $\mathcal{H}$.

\begin{theorem}\label{Zac-Hi}
Let $\lambda >0$ and $n=2,3,\ldots$. For $f\in \mathcal{H}$, we have
$$|\lambda a_n^2-a_{2n-1}|\le \max \left\{\frac{\lambda}{r(n)^2}, \frac{1}{r(2n-1)}\right\}.
$$
Equality holds if and only if
$$f(z)=
\left \{
\begin{array}{ll}
z+\ds\frac{\alpha}{r(2n-1)}z^{2n-1} & \mbox{ for } \lambda \le \ds\frac{r(n)^2}{r(2n-1)},\\
z+\ds\frac{\alpha}{r(n)}z^n & \mbox{ for }\lambda \ge \ds\frac{r(n)^2}{r(2n-1)},\\
\end{array}
\right.
$$
where $\alpha$ is a complex number such that $|\alpha|=1$. 
\end{theorem}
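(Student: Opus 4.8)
The plan is to mimic the proof of Theorem~C, with the Hurwitz weights $n$ and $2n-1$ replaced by the general weights $r(n)$ and $r(2n-1)$. Fix $n\ge 2$; note that $2n-1\ge 3$, so $r(2n-1)>0$ and the indices $n$ and $2n-1$ are distinct. First I would apply the triangle inequality,
$$|\lambda a_n^2-a_{2n-1}|\le \lambda|a_n|^2+|a_{2n-1}|,$$
and put $t:=r(n)|a_n|$ and $s:=r(2n-1)|a_{2n-1}|$, so that $f\in\mathcal{H}$ gives $t,s\ge 0$ and $t+s\le 1$. The right-hand side above equals $\lambda t^2/r(n)^2+s/r(2n-1)$, which is nondecreasing in each of $t$ and $s$; hence it is maximized on the segment $t+s=1$, and substituting $s=1-t$ reduces matters to maximizing the strictly convex quadratic $g(t)=\lambda t^2/r(n)^2+(1-t)/r(2n-1)$ over $t\in[0,1]$. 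A strictly convex function on an interval attains its maximum only at an endpoint, so $g(t)\le\max\{g(0),g(1)\}=\max\{\lambda/r(n)^2,\,1/r(2n-1)\}$, which is the asserted bound.

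For the ``if'' direction of the equality statement one checks directly that the two listed functions are extremal: for $f(z)=z+(\alpha/r(2n-1))z^{2n-1}$ with $|\alpha|=1$ we have $a_n=0$, whence $|\lambda a_n^2-a_{2n-1}|=1/r(2n-1)$, which equals the maximum precisely when $\lambda/r(n)^2\le 1/r(2n-1)$, i.e. $\lambda\le r(n)^2/r(2n-1)$; symmetrically, $f(z)=z+(\alpha/r(n))z^n$ gives $|\lambda a_n^2-a_{2n-1}|=\lambda/r(n)^2$, matching the maximum when $\lambda\ge r(n)^2/r(2n-1)$. For the ``only if'' direction, suppose equality holds in the theorem. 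Tracing back through the chain of inequalities forces $t+s=1$ and, by strict convexity of $g$, $t\in\{0,1\}$; so either $a_n=0$ with $r(2n-1)|a_{2n-1}|=1$, or $a_{2n-1}=0$ with $r(n)|a_n|=1$. In either case a single weighted modulus already saturates $\sum_{k\ge 2}r(k)|a_k|\le 1$, which forces every other coefficient to vanish, so $f$ is one of the two stated monomials, and $\lambda$ must lie in the range for which the corresponding endpoint is the true maximum.

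I do not anticipate a genuine difficulty; the only care needed is in the equality bookkeeping: confirming that saturating one term of $\sum_{k\ge 2}r(k)|a_k|\le 1$ annihilates all the rest, observing that once $t\in\{0,1\}$ one of $\lambda a_n^2$, $a_{2n-1}$ vanishes so the triangle inequality is automatically sharp (no argument alignment needs to be invoked), and noting that at the borderline value $\lambda=r(n)^2/r(2n-1)$ both endpoints give the same bound, so both monomial forms are extremal there, consistent with the overlapping ranges $\lambda\le r(n)^2/r(2n-1)$ and $\lambda\ge r(n)^2/r(2n-1)$ in the statement.
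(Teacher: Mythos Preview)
Your argument is correct and follows essentially the same route as the paper: the paper isolates the optimization step as a separate lemma (maximize $\lambda u^2+v$ over the triangle $q(n)u+q(2n-1)v\le 1$, $u,v\ge 0$, using that the function has no interior critical points and that $g''>0$ on the hypotenuse), then applies the triangle inequality exactly as you do. Your treatment of the equality case is in fact a bit more transparent than the paper's, which passes through a rotation $f_c(z)=\overline{c}f(cz)$ before reaching the same conclusion that one of $a_n,a_{2n-1}$ must vanish and all other coefficients are zero.
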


We remark that for the choice $r(n)=n$, Theorem~\ref{Zac-Hi} turns into Theorem~C. 
Indeed, our proof is much simpler than the proof of \cite[Theorem~6]{EV}.

\section{Proof of the main results}
This section is devoted to the proof of our main results.  
The following lemmas are useful.

\noindent{\bf Lemma~A. }\cite[Lemma~1]{Liv69}\label{lem-pm}
{\em Let $\mu(\theta)$ be a probability measure on $[0,2\pi]$. 
Then
$$|b_{n-1} b_{m-1}-b_{n+m-2}|\le 2 \quad\quad (n,m = 2,3,\ldots),
$$
where $b_n=2\int_0^{2\pi}{e^{in{\theta}}{\rm d}\mu({\theta})}$.
}

\begin{lemma}\label{lem1}
Let $\lambda\in\mathbb{C}$, $\mu(\theta)$ be a probability measure on $[0,2\pi]$, and for some function $s(n)>0$, write $a_n=s(n) \int_0^{2\pi} {e^{i(n-1){\theta}}{\rm d}\mu({\theta})}=s(n)b_{n-1}/2$ where $b_n$ is same as in Lemma~A. Then
$$|\lambda a_n a_m-a_{n+m-1}|\le \left |\lambda-\frac{2s(n+m-1)}{s(n)s(m)}\right| s(n)s(m)+ s(n+m-1), 
$$
for $n,m = 2,3,\ldots$.
\end{lemma}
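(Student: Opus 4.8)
The plan is to reduce the estimate entirely to Lemma~A together with the trivial bound $|b_k|\le 2$, which holds for every $k$ because $\mu$ is a probability measure: $|b_k| = 2\bigl|\int_0^{2\pi} e^{ik\theta}\,d\mu(\theta)\bigr| \le 2\int_0^{2\pi} d\mu(\theta) = 2$.

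First I would express the functional $\lambda a_n a_m - a_{n+m-1}$ purely through the moments $b_k$. Since $a_n = s(n)\,b_{n-1}/2$, one has $a_n a_m = \frac{s(n)s(m)}{4}\,b_{n-1}b_{m-1}$ and $a_{n+m-1} = \frac{s(n+m-1)}{2}\,b_{n+m-2}$, so, writing $\ds \nu := \frac{2s(n+m-1)}{s(n)s(m)} > 0$,
$$\lambda a_n a_m - a_{n+m-1} = \frac{s(n)s(m)}{4}\bigl(\lambda\,b_{n-1}b_{m-1} - \nu\,b_{n+m-2}\bigr).$$

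The key algebraic manoeuvre is then the splitting
$$\lambda\,b_{n-1}b_{m-1} - \nu\,b_{n+m-2} = \nu\bigl(b_{n-1}b_{m-1} - b_{n+m-2}\bigr) + (\lambda-\nu)\,b_{n-1}b_{m-1},$$
chosen precisely so that the first summand is the combination controlled by Lemma~A. Applying the triangle inequality, Lemma~A (which gives $|b_{n-1}b_{m-1} - b_{n+m-2}|\le 2$) and the bound $|b_{n-1}b_{m-1}|\le |b_{n-1}|\,|b_{m-1}| \le 4$ yields $|\lambda\,b_{n-1}b_{m-1} - \nu\,b_{n+m-2}| \le 2\nu + 4|\lambda-\nu|$. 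Multiplying through by $s(n)s(m)/4$ and simplifying via $\frac{s(n)s(m)}{4}\cdot 2\nu = s(n+m-1)$ and $\frac{s(n)s(m)}{4}\cdot 4|\lambda-\nu| = s(n)s(m)\,\bigl|\lambda - \tfrac{2s(n+m-1)}{s(n)s(m)}\bigr|$ gives exactly the asserted inequality.

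I do not expect a genuine obstacle here; the only thing requiring care is picking the splitting constant $\nu$ so that the Lemma~A term appears with coefficient matching its bound, and then verifying that the two resulting terms collapse to the stated right-hand side after clearing the factor $s(n)s(m)/4$. Note that this argument only uses the crude estimate $|b_{n-1}b_{m-1}|\le 4$ rather than any finer structure of the $b_k$, so no sharpness is claimed at this stage; the extremal functions in Theorems~\ref{Zac-coc} and \ref{Zac-NW} will instead be produced afterwards by choosing $\mu$ (hence $f$) appropriately and checking that equality propagates through each step above.
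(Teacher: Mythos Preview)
Your proof is correct and follows essentially the same approach as the paper: both substitute $a_n = s(n)b_{n-1}/2$, split the expression into a $(\lambda-\nu)\,b_{n-1}b_{m-1}$ term and a $\nu(b_{n-1}b_{m-1}-b_{n+m-2})$ term with $\nu = 2s(n+m-1)/(s(n)s(m))$, then apply the triangle inequality together with Lemma~A and the trivial bound $|b_k|\le 2$. Your write-up is in fact more explicit than the paper's, which compresses the splitting and the two estimates into a single displayed line.
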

\begin{proof}
Putting the values of $a_n,a_m, a_{n+m-1}$ and by using Lemma~A we get
\begin{eqnarray*}
&&|\lambda a_n a_m-a_{n+m-1}|\\
&=&\left |\left(\lambda-\frac{2s(n+m-1)}{s(n)s(m)}\right)s(n)\frac{b_{n-1}}{2}s(m)\frac{b_{m-1}}{2}-\frac{s(n+m-1)}{2}(b_{n-1}b_{m-1}-b_{n+m-2})\right|\\
&\le& \left |\lambda-\frac{2s(n+m-1)}{s(n)s(m)}\right| s(n)s(m)+ s(n+m-1).
\end{eqnarray*}
The proof of our lemma is complete.
\end{proof}

\begin{remark}
Lemma~\ref{lem1} helps us to estimate the generalized 
Zalcman coefficient functional $\lambda a_{n}a_{m}-a_{n+m-1}$
for several classes of functions in $\mathcal{S}$, where
the coefficients $a_n$ are of the form
$s(n) \int_0^{2\pi} {e^{i(n-1){\theta}}{\rm d}\mu({\theta})}$ and
these lead to extremal functions whose series representations are 
of the form $z+\sum_{n=2}^\infty s(n)z^n$,
for instance, see \cite{Ma99} and the present paper. 
\end{remark}

\begin{proof}[\bf Proof of Theorem~\ref{Zac-coc}]
By a well-known result from \cite{BMW71}, there is a unique probability measure $\mu$ on $[0, 2\pi]$, such that
$$f(z)=\int_0^{2\pi}\frac{z}{1-e^{i\theta}z}{\rm d}\mu{(\theta)}
$$
for all $f$ in $\overline{co(C)}$. Comparing the $n$-th coefficients of the series 
expansion of $f$ and of the geometric series expansion of the right hand side, it can easily be seen that
$$a_n=\int_0^{2\pi}{e^{i(n-1){\theta}}{\rm d}\mu({\theta})}, \quad n\ge2.
$$
From Lemma~\ref{lem1}, we can see that $s(n)=1$ and hence we get
$$|\lambda a_n a_m-a_{n+m-1}|\le |\lambda-2|+1=\lambda-1.
$$
The sharpness can easily be verified using the function $l(z)$ stated in the statement.
\end{proof}
\noindent Putting $m=n$ in Theorem~\ref{Zac-coc}, we get
\begin{corollary}\label{cor1}
If $f\in \overline{co(C)}$, then 
$$|\lambda a_n^2-a_{2n-1}|\le \lambda -1,
$$
where $n=2,3,\ldots$ and $\lambda \in [2, \infty)$.
Equality holds for the function $l(z)=z/(1-z)$ and its rotations.
\end{corollary}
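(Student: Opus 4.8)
The plan is to obtain Corollary~\ref{cor1} as the diagonal case $m=n$ of Theorem~\ref{Zac-coc}, so essentially no new work is required. First I would invoke Theorem~\ref{Zac-coc}, which asserts that for $f\in\overline{co(C)}$ one has $|\lambda a_n a_m-a_{n+m-1}|\le \lambda-1$ for all integers $n,m\ge 2$ and all $\lambda\in[2,\infty)$. Substituting $m=n$ makes $a_n a_m=a_n^2$ and $n+m-1=2n-1$, so the inequality becomes precisely $|\lambda a_n^2-a_{2n-1}|\le\lambda-1$ for $n=2,3,\ldots$ and $\lambda\in[2,\infty)$, which is the assertion. (Equivalently, one could rerun the argument of the proof of Theorem~\ref{Zac-coc} with $s(n)\equiv 1$ and $m=n$ through Lemma~\ref{lem1}, getting $|\lambda-2|+1=\lambda-1$; but quoting the theorem is cleaner.)

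For the sharpness statement I would exhibit the function $l(z)=z/(1-z)=\sum_{n\ge1}z^n$. It lies in $\overline{co(C)}$, being the function attached to the point-mass probability measure $\mu=\delta_{0}$ in the Brickman--MacGregor--Wilken representation used in the proof of Theorem~\ref{Zac-coc}; its Taylor coefficients are all equal to $1$, so $a_n=a_{2n-1}=1$. Hence $|\lambda a_n^2-a_{2n-1}|=|\lambda-1|=\lambda-1$, since $\lambda\ge 2>1$, and the bound is attained. The same holds for the rotations $\overline{\varepsilon}\,l(\varepsilon z)$, $|\varepsilon|=1$, which have coefficients $\varepsilon^{n-1}$ and therefore give $|\lambda\varepsilon^{2(n-1)}-\varepsilon^{2(n-1)}|=\lambda-1$.

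There is no real obstacle here: the corollary is a pure specialisation, and the only point worth a remark is that the hypothesis $\lambda\in[2,\infty)$ is carried over verbatim so that the right-hand side $\lambda-1=|\lambda-2|+1$ is nonnegative and is a genuine upper bound, and that the extremal function $l$ and its rotations remain inside $\overline{co(C)}$.
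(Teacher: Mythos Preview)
Your proposal is correct and matches the paper's approach exactly: the paper derives Corollary~\ref{cor1} simply by putting $m=n$ in Theorem~\ref{Zac-coc}, and the extremal function $l(z)=z/(1-z)$ (with its rotations) is inherited from that theorem just as you describe. Your parenthetical alternative via Lemma~\ref{lem1} is essentially the same computation, and the paper's subsequent remark also notes an alternative via the technique of \cite{EV}, but the core derivation is the specialisation you give.
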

\begin{remark}
We can also prove Corollary~\ref{cor1} by using the same technique as in \cite{EV}.
Indeed, from the proof of \cite[Theorem~3]{EV} we have
$$|\lambda a_n^2-a_{2n-1}|\le (\lambda-2)\int_0^{2\pi} \cos^2((n-1)\theta)
{\rm d}\mu(\theta)+1\le \lambda-1,
$$
where the second inequality follows from the fact that $\cos \theta\le 1$ for $0\le \theta\le 2\pi$.
\end{remark}

\begin{proof}[\bf Proof of Theorem~\ref{Zac-NW}]
By the Herglotz representation theorem for functions with positive real part \cite[1.9]{Dur83}, there is a unique probability measure $\mu$ on $[0, 2\pi]$ such that
$$\frac{f'(z)-\beta}{1-\beta} =\int_0^{2\pi}\frac{1+e^{i\theta}z}{1-e^{i\theta}z}{\rm d}\mu{(\theta)}
$$
or, equivalently,
$$ 1+\sum_{n=2}^{\infty}{n a_n z^{n-1}}=1+(1-\beta)\sum_{n=2}^{\infty}{2\int_0^{2\pi}e^{in\theta}{\rm d}\mu{(\theta)}z^n}.
$$
Comparing the coefficients, we obtain
$$a_n=\frac{2(1-\beta)}{n}\int_0^{2\pi}{e^{i(n-1){\theta}}{\rm d}\mu({\theta})}, \quad n\ge2.
$$
From Lemma~\ref{lem1}, we can see that $s(n)=2(1-\beta)/n$ and hence we get
\begin{eqnarray*}
|\lambda a_n a_m-a_{n+m-1}|
&\le &\left|\lambda-\frac{nm}{(1-\beta)(n+m-1)}\right|\frac{4(1-\beta)^2}{nm}+\frac{2(1-\beta)}{n+m-1}\\
&=& \frac{4\lambda (1-\beta)^2}{nm}-\frac{2(1-\beta)}{n+m-1}.
\end{eqnarray*}
The sharpness can easily be verified using the given function $m(z)$ stated in the
hypothesis of the theorem.
\end{proof}

In particular when $m=n$, Theorem~\ref{Zac-NW} leads to
\begin{corollary}\label{cor3.4}
If $f\in \mathcal{R}(\beta)$ and $\lambda \in \left[\frac{n^2}{(2n-1)(1-\beta)}, \infty\right)$,
then 
$$|\lambda a_n^2-a_{2n-1}|\le \frac{4\lambda (1-\beta)^2}{n^2}-\frac{2(1-\beta)}{2n-1},
$$
where $n=2,3,\ldots$. 
Equality holds for the function $m(z)=-2(1-\beta)\ln{(1-z)}-z(1-2\beta)$ and its rotations.
\end{corollary}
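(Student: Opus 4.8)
The plan is to obtain Corollary~\ref{cor3.4} as the diagonal case $m=n$ of Theorem~\ref{Zac-NW}, and then to verify directly that the admissible range for $\lambda$, the bound, and the extremal function all specialize correctly. Since Theorem~\ref{Zac-NW} is already in hand, the quickest route is simply to set $m=n$: the lower endpoint $\frac{nm}{(1-\beta)(n+m-1)}$ of the admissible range collapses to $\frac{n^2}{(1-\beta)(2n-1)}$, while the right-hand side $\frac{4\lambda(1-\beta)^2}{nm}-\frac{2(1-\beta)}{n+m-1}$ collapses to $\frac{4\lambda(1-\beta)^2}{n^2}-\frac{2(1-\beta)}{2n-1}$, because $nm=n^2$ and $n+m-1=2n-1$. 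This is precisely the assertion of the corollary, so beyond this bookkeeping nothing further is required for the inequality itself.

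For a self-contained argument that does not quote the two-index theorem, I would instead specialize from the outset along the lines that establish Theorem~\ref{Zac-NW}. First, the Herglotz representation for functions of positive real part supplies a probability measure $\mu$ on $[0,2\pi]$ with $a_n=\frac{2(1-\beta)}{n}\int_0^{2\pi}e^{i(n-1)\theta}\,{\rm d}\mu(\theta)$, so in the notation of Lemma~\ref{lem1} one has $s(n)=\frac{2(1-\beta)}{n}$. Applying Lemma~\ref{lem1} with $m=n$ then gives
\begin{equation*}
|\lambda a_n^2-a_{2n-1}|\le\left|\lambda-\frac{2s(2n-1)}{s(n)^2}\right|s(n)^2+s(2n-1),
\end{equation*}
and a short computation yields $\frac{2s(2n-1)}{s(n)^2}=\frac{n^2}{(1-\beta)(2n-1)}$. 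The hypothesis $\lambda\ge\frac{n^2}{(1-\beta)(2n-1)}$ is exactly what allows the modulus to be removed with a nonnegative sign, and expanding the resulting expression reproduces the claimed bound.

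The point that genuinely needs checking is sharpness. Expanding $m(z)=-2(1-\beta)\ln(1-z)-z(1-2\beta)$ through $-\ln(1-z)=\sum_{k\ge1}z^k/k$ shows that the linear terms combine to give coefficient $1$ on $z$ and $a_k=\frac{2(1-\beta)}{k}$ for $k\ge2$; and since $m'(z)=\frac{2(1-\beta)}{1-z}-(1-2\beta)$ with $\real\frac{1}{1-z}>\frac12$ on $\D$, one gets $\real m'(z)>\beta$, so indeed $m\in\mathcal{R}(\beta)$. For this function $a_n$ and $a_{2n-1}$ are real and positive, whence $\lambda a_n^2-a_{2n-1}=\frac{4\lambda(1-\beta)^2}{n^2}-\frac{2(1-\beta)}{2n-1}$, and the range condition on $\lambda$ forces this quantity to be nonnegative, hence equal to its own modulus and to the stated bound.

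I anticipate no serious obstacle, since the substitution $m=n$ is immediate and the Lemma~\ref{lem1} computation is routine. The only delicate step is the threshold arithmetic, namely confirming that $\frac{2s(2n-1)}{s(n)^2}$ equals exactly the stated lower endpoint for $\lambda$; this is what aligns the sign condition used to drop the modulus with the range in which the logarithmic extremal $m(z)$ actually achieves equality.
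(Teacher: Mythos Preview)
Your proposal is correct and follows the same approach as the paper: the corollary is obtained simply by setting $m=n$ in Theorem~\ref{Zac-NW}. Your additional self-contained derivation via Lemma~\ref{lem1} and the explicit verification of sharpness for $m(z)$ are more detailed than what the paper records, but the underlying idea is identical.
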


\begin{remark}\label{R1}
Alternative proof of Corollary~\ref{cor3.4} can be done by the same technique as in \cite{EV}. 
Indeed, from the proof of \cite[Theorem~4]{EV} we have
\begin{eqnarray*}
|\lambda a_n^2-a_{2n-1}|&\le &\left(\frac{4\lambda (1-\beta)^2}{n^2}-\frac{4(1-\beta)}{2n-1}\right) \int_0^{2\pi} \cos^2((n-1)\theta){\rm d}\mu(\theta)+\frac{2(1-\beta)}{2n-1}\\
&\le & \frac{4\lambda (1-\beta)^2}{n^2}-\frac{2(1-\beta)}{2n-1},
\end{eqnarray*}

where the second inequality follows from the fact that $\cos \theta\le 1$ for $0\le \theta\le 2\pi$.
\end{remark}

\begin{remark}\label{R2}
From Remark~\ref{R1}, it is clear that for $f\in\mathcal{R}(\beta)$ and for $0<\lambda\le 4/3(1-\beta)$,
$$|\lambda a_n^2-a_{2n-1}|\le \frac{2(1-\beta)}{2n-1}.
$$
Equality holds for the function $m(z)$ and its rotations.
\end{remark}

To prove the generalized Zalcman problem for 
$\mathcal{H}$, we need the following lemma which is in a similar form of \cite[Lemma~5]{EV}.
\begin{lemma}\label{l2}
Let $\lambda >0, n \ge 2, q(n), q(2n-1) > 0$, and consider the triangular region
$$P=\{(u, v)\in \mathbb{R}^2: u, v \ge 0, q(n) u+q(2n-1)v\le 1\}$$
in the $uv$-plane. Then
$$\max_{(u,v)\in P}(\lambda u^2+v)=\max\left\{\frac{\lambda }{q(n)^2}, \frac{1}{q(2n-1)}\right\},
$$
and the maximum attain only at $(u,v)=(0,1/q(2n-1))$ and $(u,v)=(1/q(n),0)$.
\end{lemma}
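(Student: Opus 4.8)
The plan is to exploit the convexity of the objective $\phi(u,v)=\lambda u^2+v$ on the compact convex triangle $P$, whose vertices are $(0,0)$, $(1/q(n),0)$ and $(0,1/q(2n-1))$. First I would note that for each fixed $u$ the map $v\mapsto\phi(u,v)$ is strictly increasing; hence any point of $P$ lying strictly below the hypotenuse $\ell:q(n)u+q(2n-1)v=1$ fails to be a maximizer, because replacing $v$ by the larger value $(1-q(n)u)/q(2n-1)$ keeps the point in $P$ while strictly increasing $\phi$. Thus it suffices to maximize $\phi$ over $\ell\cap P$.

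Parametrizing $\ell\cap P$ by $u\in[0,1/q(n)]$ and $v=(1-q(n)u)/q(2n-1)$, the problem reduces to maximizing the one-variable function
$$g(u)=\lambda u^2-\frac{q(n)}{q(2n-1)}\,u+\frac{1}{q(2n-1)},\qquad u\in\left[0,\tfrac{1}{q(n)}\right].$$
Since $g''(u)=2\lambda>0$, the function $g$ is strictly convex, so on this closed interval it attains its maximum only at the endpoints $u=0$ and $u=1/q(n)$ (at both if the two endpoint values agree, at exactly one otherwise). A direct computation gives $g(0)=1/q(2n-1)$ and $g(1/q(n))=\lambda/q(n)^2$, corresponding in the $uv$-plane to the vertices $(0,1/q(2n-1))$ and $(1/q(n),0)$. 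Tracing back through the two reductions, and observing that the third vertex $(0,0)$ yields the strictly smaller value $0$ (as $\lambda,q(n),q(2n-1)>0$), we conclude
$$\max_{(u,v)\in P}\phi(u,v)=\max\left\{\frac{\lambda}{q(n)^2},\frac{1}{q(2n-1)}\right\},$$
with the maximum attained only at the stated two points.

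I do not anticipate any real obstacle; the only point requiring care is the word \emph{only} in the statement, which forces one to track strictness: the strict monotonicity in $v$ excludes the interior of $P$ and the two legs of the triangle, while the strict convexity of $g$ excludes the interior of the hypotenuse. As an alternative I might simply invoke the general principle that a convex function on a compact convex polytope attains its maximum at a vertex, then compare the three vertex values $0$, $\lambda/q(n)^2$ and $1/q(2n-1)$ directly.
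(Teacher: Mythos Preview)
Your proposal is correct and follows essentially the same approach as the paper: reduce to the hypotenuse and then use the strict convexity $g''(u)=2\lambda>0$ to force the maximum to the endpoints, yielding the values $1/q(2n-1)$ and $\lambda/q(n)^2$. The only cosmetic difference is that the paper reaches the boundary by observing that $F$ has no critical points and then checks the two legs separately, whereas you use monotonicity in $v$ to jump straight to the hypotenuse; your version also tracks the strictness needed for the word \emph{only} a bit more explicitly than the paper does.
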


\begin{proof}
The function $F(u, v)=\lambda u^2+v$ is readily seen to have no critical points,
so its maximum on the compact set $P$ is achieved on the boundary $\partial P$.
Clearly, $F(0, v)\le \frac{1}{q(2n-1)}$ while $F(u, 0)\le \frac{\lambda}{q(n)^2}$.

Finally, on the third piece of the boundary of $P$ we have $q(n) u+q(2n-1) v=1$. 
Hence the function $F$ on that piece can be seen as a function of one variable
leading to
$$F(u, v)=g(u)=\lambda u^2+\frac{1-q(n) u}{q(2n-1)}.
$$
Since $g^{\prime\prime}({u})=2\lambda > 0$, the above function cannot achieve its maximum
within the interval $\left[0, \frac{1}{q(n)}\right]$. %even its critical point $\frac{q(n)}{2\lambda q(2n-1)}$ is an interior point of the admissible interval $[0, \frac{\zeta}{q(n)}]$ for $u$.
Hence, the maximum value can only be achieved at one of the end points of this interval
 and since 
$$g(0)=\frac{1}{q(2n-1)},\quad g\left(\frac{1}{q(n)}\right)=\frac{\lambda}{q(n)^2},
$$
the assertion follows.
\end{proof}

\begin{remark}
$Lemma~\ref{l2}$ can also be proved using graphical solution method from Linear Programming Problem.
Since the conditions $u,v\ge 0, \mbox{ and }q(n) u+q(2n-1)v\le 1$ give a convex triangular region with the vertices $(0, 0), (0, 1/q(2n-1)) \mbox { and } (1/q(n), 0)$, by 
the graphical solution method, maximum value can only be achieved at one of these vertices. 
\end{remark}

\begin{proof}[\bf Proof of Theorem~\ref{Zac-Hi}]
By the definition of the class $\mathcal{H}$, the ordered pair $(|a_n|, |a_{2n-1}|)$
belongs to $P$, where $P$ is defined in Lemma~\ref{l2}.
Hence by using Lemma \ref{l2}, we have
$$|\lambda a_n^2-a_{2n-1}|\le \lambda |a_n|^2+|a_{2n-1}|\le \max \left\{\frac{\lambda}{r(n)^2}, \frac{1}{r(2n-1)}\right\}.
$$
It can easily be seen that one way implication is true for the equality. 
For the converse part, we must have $r(n)|a_n|+r(2n-1)|a_{2n-1}|=1$. 
Together with the definition of $\mathcal{H}$, it follows that the rotated 
function must be of the form $f_c(z)=z+A_nz^n+A_{2n-1}z^{2n-1}$, 
where, $f_c(z)=\overline{c}f(cz), |c|=1,$ a rotation of a function $f$ in $\mathcal{S}$, $A_n=c^{n-1}a_n$, and similarly $A_{2n-1}=c^{2n-2}a_{2n-1}$. 
Further inspection of the case of equality in Lemma \ref{l2} and the values of 
$\lambda$ readily yields that one of the coefficients $A_n, A_{2n-1}$ must 
be zero and the more precise form of these functions follows immediately.  
This completes the proof of the theorem.
\end{proof}

\subsection*{Concluding remarks}
The generalized Zalcman conjecture in the form proposed by Ma in \cite{Ma99} is still 
open for the classes $\overline{co(C)}$ and $\mathcal{R}(\beta)$ for $0<\lambda<2$ and
$0<\lambda <\frac{nm}{(1-\beta)(n+m-1)}$ respectively. It would also be interesting to
investigate this problem for the class $\mathcal{H}$.

\vskip 1cm
\noindent
{\bf Acknowledgement.} The work of the first author is supported by University
Grants Commission, New Delhi (grant no. F.2-39/2011 (SA-I)). 
The authors would like to
thank Prof. S. Ponnusamy for helpful discussions and suggestions in this topic.

\end{document}